\DeclareMathOperator{\argmax}{arg\,max}
\newtheorem{theorem}{Theorem}
\newtheorem{assumption}{Assumption}
\newtheorem{lemma}{Lemma}
\newtheorem{proposition}{Proposition}
\title{\LARGE \bf
On Some Geometric Behavior of Value Iteration on the Orthant: Switching System Perspective
}
\author{Donghwan Lee
\thanks{D. Lee is with the Department of Electrical and Engineering, Korea Advanced Institute of Science and Technology (KAIST), Daejeon, 34141, South Korea {\tt\small
donghwan@kaist.ac.kr}.}
\thanks{This material was supported in part by the National Research Foundation under Grant NRF-2021R1F1A1061613 and in part by the Institute of Information communications Technology Planning Evaluation (IITP) grant funded by the Korea government
(MSIT)(No.2022-0-00469)}
}
\begin{document}

\maketitle
\thispagestyle{empty}
\pagestyle{empty}

\begin{abstract}
In this paper, the primary goal is to offer additional insights into the value iteration through the lens of switching system models in the control community. These models establish a connection between value iteration and switching system theory and reveal additional geometric behaviors of value iteration in solving discounted Markov decision problems. Specifically, the main contributions of this paper are twofold: 1) We provide a switching system model of value iteration and, based on it, offer a different proof for the contraction property of the value iteration. 2) Furthermore, from the additional insights, new geometric behaviors of value iteration are proven when the initial iterate lies in a special region. We anticipate that the proposed perspectives might have the potential to be a useful tool, applicable in various settings. Therefore, further development of these methods could be a valuable avenue for future research.

\end{abstract}

\section{Introduction}
Dynamic programming~\cite{bertsekas1996neuro,bertsekas2015dynamic} is a general and effective methodology for finding an optimal solution for Markov decision problems~\cite{puterman2014markov}. Value iteration is a popular class of dynamic programming algorithms, and its exponential convergence is a fundamental and well-established~\cite{bertsekas1996neuro,bertsekas2015dynamic} result through the contraction mapping property of the Bellman operator.

In this paper, we offer some additional insights on the value iteration based on switching system models~\cite{liberzon2003switching,lee2017periodic,lee2018stabilizability,Geromel2006a,Hu2008,lin2009stability,Zhang2009a,lee2020unified} in the control community. These models provide new geometric behaviors of value iteration to solve Markov decision problems~\cite{puterman2014markov}. In particular, the main contributions of this paper are twofold: 1) We present a switching system model of value iteration and, based on it, provide a different proof for the contraction property of value iteration. 2) Furthermore, from the new perspectives and frameworks, we study additional geometric behaviors of value iteration when the initial iterate lies in a special region. More specifically, the detailed contributions are summarized as follows:
\begin{enumerate}
\item A switching system model of value iteration is introduced along with its special properties. Based on it, we offer a different proof for the following contraction property of the value iteration:
\begin{align}
\left\| {Q_{k+1}  - Q^* } \right\|_\infty   \le \gamma \left\| {Q_k  - Q^* } \right\|_\infty,\label{eq:0}
\end{align}
where $Q_k$ is the $k$th iterate of the value iteration, $Q^*$ is the optimal Q-function, and $\gamma\in (0,1)$ is the so-called discount factor in the underlying Markov decision problem. Although this result is fundamental and classical, the proof technique utilized in this paper is distinct from the classical approaches, offering additional insights.

\item Furthermore, from the additional insights provided in the first part, we prove new geometric behaviors of value iteration when the initial iterate lies in a special region. Specifically, when the initial iterate $Q_0$ is within the set $Q_0 \leq Q^*$ (shifted orthant), the iterates of the value iteration remain in the set, displaying different behaviors than the cases with a general initial iterate. When $Q_0 \leq Q^* $, one can establish the new contraction property
\begin{align}
\left\| {Q_{k+1}  - Q^* } \right\|_M  \le (\gamma  + \varepsilon ) \left\| {Q_k - Q^* } \right\|_M\label{eq:2}
\end{align}
where $\left\| ( \cdot )\right\|_M = \sqrt {( \cdot )^T M( \cdot )}$ is the weighted Euclidean norm, $\varepsilon >0$ is any real number such that $\gamma + \varepsilon \in (0,1)$, and $M$ is a positive definite matrix dependent on $\varepsilon >0$. This result corresponds to a contraction mapping property of the Bellman operator in terms of the weighted Euclidean norm, which cannot be derived from the classical approaches. Geometrically, the sublevel sets corresponding to the infinity norm $\left\| \cdot \right\|_\infty$ are squares, while the sublevel sets corresponding to the weighted Euclidean norm $\left\| \cdot \right\|_M$ are ellipsoids.

A more notable result is that when $Q_0 \leq Q^*$, the iterates satisfy the linear inequality
\begin{align}
(\gamma  + \varepsilon )v^T (Q_k  - Q^* ) \le v^T (Q_{k + 1}  - Q^* ) \le 0,\quad \forall k \ge 0,\label{eq:7}
\end{align}
where $\varepsilon >0$ is any real number such that $\gamma + \varepsilon \in (0,1)$, and $v$ is a positive vector dependent on $\varepsilon$. Geometrically, this implies that the component along the direction of $v$ in $Q_k - Q^*$ diminishes exponentially.
\end{enumerate}

The analysis presented in this paper employs insights and tools from switching systems~\cite{liberzon2003switching,lee2017periodic,lee2018stabilizability,Geromel2006a,Hu2008,lin2009stability,Zhang2009a,lee2020unified}, positive switching systems~\cite{blanchini2012co,fornasini2010linear,zhang2013stabilization,fornasini2011stability}, nonlinear systems~\cite{khalil2002nonlinear}, and linear systems~\cite{chen1995linear}. We emphasize that our goal in this paper is to offer additional insights and analysis templates for value iteration via its connections to switching systems, rather than improving existing convergence rates. We anticipate that the proposed switching system perspectives may have the potential to be a useful tool, applicable in various settings, and stimulate the synergy between dynamic system theory and dynamic programming. Thus, further development of these methods could be a valuable avenue for future research. Finally, some ideas in this paper have been inspired by~\cite{lee2020unified,lee2021discrete}, which develop switching system models of Q-learning~\cite{sutton1998reinforcement}. However, most technical results and derivation processes are nontrivial and entirely different from those in~\cite{lee2020unified,lee2021discrete}. Another related work is~\cite{guo2022convex}, which provides a linear programming-based analysis of value iteration and also employs positive switching system models for their analysis. However, the previous work merely proposed a numerical linear programming-based analysis to verify convergence properties and did not obtain the explicit relations given in~\eqref{eq:0}, \eqref{eq:2} ,\eqref{eq:7}. Therefore, our analysis offers significantly different results than those in~\cite{guo2022convex}.

\section{Preliminaries}\label{sec:preliminaries}

\subsection{Notations}
The adopted notation is as follows: ${\mathbb R}$: set of real numbers; ${\mathbb R}^n $: $n$-dimensional Euclidean
space; ${\mathbb R}^{n \times m}$: set of all $n \times m$ real
matrices; $A^T$: transpose of matrix $A$; $A \succ 0$ ($A \prec
0$, $A\succeq 0$, and $A\preceq 0$, respectively): symmetric
positive definite (negative definite, positive semi-definite, and
negative semi-definite, respectively) matrix $A$; $I$: identity matrix with appropriate dimensions; $|{\cal S}|$: cardinality of a finite set $\cal S$; $A \otimes B$: Kronecker’s product of matrices $A$ and $B$; $\bf 1$: the vector with ones in all elements; $\lambda _{\max } ( \cdot )$: maximum eigenvalue; $\lambda _{\min} ( \cdot )$: minimum eigenvalue.

\subsection{Markov decision problem}
We consider the infinite-horizon discounted Markov decision problem (MDP)~\cite{puterman2014markov}, where the agent sequentially takes actions to maximize cumulative discounted rewards. In a Markov decision process with the state-space ${\cal S}:={ 1,2,\ldots ,|{\cal S}|}$ and action-space ${\cal A}:= {1,2,\ldots,|{\cal A}|}$, the decision maker selects an action $a \in {\cal A}$ with the current state $s$. The state then transitions to a state $s'$ with probability $P(s'|s,a)$, and the transition incurs a reward $r(s,a,s')$, where $r$ is a reward function. For convenience, we consider a deterministic reward function and simply write $r(s_k,a_k ,s_{k + 1}) =:r_k, k\geq 0$. A deterministic policy, $\pi :{\cal S} \to {\cal A}$, maps a state $s \in {\cal S}$ to an action $\pi(s)\in {\cal A}$. The objective of the Markov decision problem (MDP) is to find an optimal (deterministic) policy, $\pi^*$, such that the cumulative discounted rewards over infinite time horizons are maximized, i.e.,
\begin{align*}
\pi^*:= \argmax_{\pi\in \Theta} {\mathbb E}\left[\left.\sum_{k=0}^\infty {\gamma^k r_k}\right|\pi\right],
\end{align*}
where $\gamma \in [0,1)$ is the discount factor, $\Theta$ is the set of all admissible deterministic policies, $(s_0,a_0,s_1,a_1,\ldots)$ is a state-action trajectory generated by the Markov chain under policy $\pi$, and ${\mathbb E}[\cdot|\pi]$ represents an expectation conditioned on the policy $\pi$. The Q-function under policy $\pi$ is defined as
\begin{align*}
&Q^{\pi}(s,a)={\mathbb E}\left[ \left. \sum_{k=0}^\infty {\gamma^k r_k} \right|s_0=s,a_0=a,\pi \right]
\end{align*}
for all $s\in {\cal S},a\in {\cal A}$, and the optimal Q-function is defined as $Q^*(s,a)=Q^{\pi^*}(s,a)$ for all $s\in {\cal S},a\in {\cal A}$. Once $Q^*$ is known, then an optimal policy can be retrieved by $\pi^*(s)=\argmax_{a\in {\cal A}}Q^*(s,a)$.
Throughout, we make the following standard assumption.
\begin{assumption}\label{assumption:bounded-reward} The reward function is unit bounded as follows:
\begin{align*}
\max _{(s,a,s') \in {\cal S} \times {\cal A} \times {\cal S}} |r (s,a,s')| \leq 1.
\end{align*}
\end{assumption}
The unit bound imposed on $r$ is just for simplicity of analysis.
Under~\cref{assumption:bounded-reward}, Q-function is bounded as follows.
\begin{lemma}\label{thm:3}
We have $\|Q^*\|_\infty \leq \frac{1}{1-\gamma}$.
\end{lemma}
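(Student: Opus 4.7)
The plan is to bound $Q^*(s,a)$ pointwise using the definition of the Q-function as an expected infinite discounted sum of rewards, together with the uniform reward bound given in \cref{assumption:bounded-reward}, and then take the supremum over $(s,a)$ to convert the pointwise bound into a bound in the infinity norm.

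Concretely, first I would start from the definition
\begin{align*}
Q^*(s,a) = \mathbb{E}\left[\sum_{k=0}^\infty \gamma^k r_k \,\middle|\, s_0=s, a_0=a, \pi^*\right]
\end{align*}
and apply the triangle inequality (or Jensen's inequality for absolute value) to move the absolute value inside the expectation and the sum. Next, I would use $|r_k| \leq 1$ from \cref{assumption:bounded-reward} together with $\gamma \in [0,1)$ to dominate the series by the geometric series $\sum_{k=0}^\infty \gamma^k = \frac{1}{1-\gamma}$. This yields $|Q^*(s,a)| \leq \frac{1}{1-\gamma}$ for every $(s,a) \in \mathcal{S} \times \mathcal{A}$, and taking the maximum over $(s,a)$ delivers the claimed bound $\|Q^*\|_\infty \leq \frac{1}{1-\gamma}$.

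There is no real obstacle here; the only minor subtlety is justifying the interchange of absolute value, expectation, and infinite sum, but this is immediate because the summands are nonnegative after taking absolute values (so monotone/dominated convergence applies) and the dominating geometric series is finite. Thus the argument is essentially a two-line computation and requires no machinery beyond the reward bound and the geometric series formula.
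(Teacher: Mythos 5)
Your proof is correct and matches the paper's argument exactly: the paper also bounds $|Q^*(s,a)|$ by $\sum_{k=0}^\infty \gamma^k = \frac{1}{1-\gamma}$ using the unit reward bound from \cref{assumption:bounded-reward}, just stated more tersely. Your additional remarks on interchanging absolute value, expectation, and sum are fine but not needed beyond what the paper already implicitly assumes.
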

\begin{proof}
The proof is straightforward from the definition of Q-function as follows: $|Q(s,a)| \le \sum\limits_{k = 0}^\infty  {\gamma ^k }  = \frac{1}{{1 - \gamma }}$.
\end{proof}

\subsection{Switching system}
Let us consider the \emph{switched linear system (SLS)}~\cite{liberzon2003switching,lee2017periodic,lee2018stabilizability,Geromel2006a,Hu2008,lin2009stability,Zhang2009a,lee2020unified},
\begin{align}
&x_{k+1}=A_{\sigma_k} x_k,\quad x_0=z\in {\mathbb
R}^n,\quad k\in \{0,1,\ldots \},\label{eq:switched-system}
\end{align}
Where $x_k \in \mathbb{R}^n$ is the state, $\sigma \in \mathcal{M} := {1, 2, \ldots, M}$ is called the mode, $\sigma_k \in \mathcal{M}$ is called the switching signal, and ${A_\sigma, \sigma \in \mathcal{M}}$ are called the subsystem matrices. The switching signal can be either arbitrary or controlled by the user under a certain switching policy. Specifically, a state-feedback switching policy is denoted by $\sigma_k = \sigma(x_k)$. The analysis and control synthesis of SLSs have been actively studied during the last decades~\cite{liberzon2003switching, lee2017periodic, lee2018stabilizability, Geromel2006a, Hu2008, lin2009stability, Zhang2009a, lee2020unified}.
A more general class of systems is the \emph{switched affine system (SAS)}
\begin{align*}
&x_{k+1}=A_{\sigma_k} x_k + b_{\sigma_k},\quad x_0=z\in {\mathbb
R}^n,\quad k\in \{0,1,\ldots \},
\end{align*}
where $b_{\sigma_k} \in {\mathbb R}^n$ is the additional input vector, which also switches according to $\sigma_k$. Due to the additional input $b_{\sigma_k}$, its stabilization becomes much more challenging.
Lastly, when all elements of the subsystem matrices, $\{A_\sigma,\sigma\in {\mathcal M}\}$, are nonnegative, then SLS is called positive SLS~\cite{blanchini2012co,fornasini2010linear,zhang2013stabilization,fornasini2011stability}.
\subsection{Definitions}\label{sec:1}
Throughout the paper, we will use the following compact notations:
\begin{align*}
P:=& \begin{bmatrix}
   P_1\\
   \vdots\\
   P_{|{\cal A}|}\\
\end{bmatrix},\; R:= \begin{bmatrix}
   R(\cdot,1) \\
   \vdots \\
   R(\cdot,|A|) \\
\end{bmatrix},
\; Q:= \begin{bmatrix}
   Q(\cdot,1)\\
  \vdots\\
   Q(\cdot,|{\cal A}|)\\
\end{bmatrix},
\end{align*}
where $P_a = P(\cdot|\cdot,a)\in {\mathbb R}^{|{\cal S}| \times |{\cal S}|}$, $Q(\cdot,a)\in {\mathbb R}^{|{\cal S}|},a\in {\cal A}$, and $R \in {\mathbb R}^{|{\cal S}||{\cal A}|}$ is an enumerate of $R(s,a): = {\mathbb E}[r_k |s_k  = s,a_k  = a]$ with an appropriate order compatible with other definitions. Note that $P\in{\mathbb R}^{|{\cal S}||{\cal A}| \times |{\cal S}|}$, and $Q\in {\mathbb R}^{|{\cal S}||{\cal A}|}$. In this notation, Q-function is encoded as a single vector $Q \in {\mathbb R}^{|{\cal S}||{\cal A}|}$, which enumerates $Q(s,a)$ for all $s \in S$ and $a \in A$ with an appropriate order. In particular, the single value $Q(s,a)$ can be written as
$
Q(s,a) = (e_a  \otimes e_s )^T Q,
$
where $e_s \in {\mathbb R}^{|{\cal S}|}$ and $e_a \in {\mathbb R}^{|{\cal A}|}$ are $s$-th basis vector (all components are $0$ except for the $s$-th component which is $1$) and $a$-th basis vector, respectively. Therefore, in the above definitions, all entries are ordered compatible with this vector $Q$.

For any stochastic policy, $\pi:{\cal S}\to \Delta_{|{\cal S}|}$, where $\Delta_{|{\cal A}|}$ is the set of all probability distributions over ${\cal A}$, we define the corresponding action transition matrix as
\begin{align}
\Pi^\pi:=\begin{bmatrix}
   \pi(1)^T \otimes e_1^T\\
   \pi(2)^T \otimes e_2^T\\
    \vdots\\
   \pi(|S|)^T \otimes e_{|{\cal S}|}^T \\
\end{bmatrix}\in {\mathbb R}^{|{\cal S}| \times |{\cal S}||{\cal A}|},\label{eq:swtiching-matrix}
\end{align}
where $e_s \in \mathbb{R}^{|{\cal S}|}$.
Then, it is well-known that
$
P\Pi^\pi \in \mathbb{R}^{|{\cal S}||{\cal A}| \times |{\cal S}||{\cal A}|}
$
is the transition probability matrix of the state-action pair under policy $\pi$.
If we consider a deterministic policy, $\pi:{\cal S}\to {\cal A}$, the stochastic policy can be replaced with the corresponding one-hot encoding vector
$
\vec{\pi}(s):=e_{\pi(s)}\in \Delta_{|{\cal A}|},
$
where $e_a \in \mathbb{R}^{|{\cal A}|}$, and the corresponding action transition matrix is identical to~\eqref{eq:swtiching-matrix} with $\pi$ replaced with $\vec{\pi}$. For any given $Q \in \mathbb{R}^{|{\cal S}||{\cal A}|}$, denote the greedy policy with respect to $Q$ as $\pi_Q(s):=\argmax_{a\in {\cal A}} Q(s,a)\in {\cal A}$.
We will use the following shorthand frequently:
$\Pi_Q:=\Pi^{\pi_Q}.$

\subsection{Q-value iteration (Q-VI)}
In this paper, we consider the so-called Q-value iteration (Q-VI)~\cite{bertsekas1996neuro} given in~\cref{algo:standard-Q-learning2}, where $F$ is called Bellman operator.
It is well-known that the iterates of Q-VI converge exponentially to $Q^*$ in terms of the infinity norm $\left\| \cdot \right\|_\infty$~\cite[Lemma~2.5]{bertsekas1996neuro}.
\begin{lemma}\label{thm:1}
We have the bounds for Q-VI iterates $\left\| {Q_{k+1}  - Q^* } \right\|_\infty   \le \gamma \left\| {Q_k  - Q^* } \right\|_\infty$.
\end{lemma}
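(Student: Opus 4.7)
The plan is to express the Q-VI update as a switched affine system whose switching signal is the greedy policy, and then sandwich the error $Q_{k+1}-Q^*$ between two linear maps driven by row-stochastic matrices. Concretely, the Bellman operator can be written as
\begin{equation*}
Q_{k+1} = R + \gamma P \Pi_{Q_k} Q_k,
\end{equation*}
so that the mode at step $k$ is the greedy policy $\pi_{Q_k}$ and the subsystem matrices are $\{\gamma P \Pi^\pi : \pi\in\Theta\}$. Subtracting the fixed-point equation $Q^* = R + \gamma P \Pi_{Q^*} Q^*$ eliminates the affine term $R$, leaving the error dynamics $Q_{k+1}-Q^* = \gamma(P\Pi_{Q_k}Q_k - P\Pi_{Q^*}Q^*)$.

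The key step is to use the two greedy inequalities $\Pi_{Q_k}Q_k \ge \Pi_{Q^*}Q_k$ and $\Pi_{Q^*}Q^* \ge \Pi_{Q_k}Q^*$ (componentwise), which are immediate from the definition of $\pi_Q$ as an argmax. Multiplying on the left by the nonnegative matrix $P$ and combining the two bounds produces the componentwise sandwich
\begin{equation*}
\gamma P\Pi_{Q^*}(Q_k-Q^*) \;\le\; Q_{k+1}-Q^* \;\le\; \gamma P\Pi_{Q_k}(Q_k-Q^*).
\end{equation*}
This is the heart of the switching-system viewpoint: although the update is nonlinear, the error is pinched between two \emph{linear} subsystems selected from the same family.

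Next I would exploit the fact that for every policy $\pi$ the matrix $P\Pi^\pi$ is row-stochastic (nonnegative entries, each row summing to one), as already noted in the preliminaries. Consequently $|P\Pi^\pi x| \le P\Pi^\pi |x| \le \|x\|_\infty\, P\Pi^\pi \mathbf{1} = \|x\|_\infty \mathbf{1}$ componentwise for any vector $x$. Applying this to both the upper and lower bounds of the sandwich, and using the elementary observation that $a\le b\le c$ with $|a|,|c|\le M$ forces $|b|\le M$, I obtain $|Q_{k+1}-Q^*| \le \gamma \|Q_k-Q^*\|_\infty \mathbf{1}$ componentwise, which is exactly the claim.

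The only delicate step is establishing the sandwich inequality, since it must be derived from the greedy optimality on both sides; the upper bound uses greediness at $Q_k$ while the lower bound uses greediness at $Q^*$. Once that is in place, the rest is a routine application of row-stochasticity. I do not expect any serious obstacle: unlike the classical proof (which collapses the maximum-of-differences into a difference-of-maxima), the switching-system argument simply keeps two distinct modes $\Pi_{Q_k}$ and $\Pi_{Q^*}$ visible throughout, which is precisely the reformulation the paper wants to emphasize before moving on to the sharper geometric results in \eqref{eq:2} and \eqref{eq:7}.
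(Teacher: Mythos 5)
Your proposal is correct and follows essentially the same route as the paper: you rewrite Q-VI as $Q_{k+1}=R+\gamma P\Pi_{Q_k}Q_k$, derive the componentwise sandwich $\gamma P\Pi_{Q^*}(Q_k-Q^*)\le Q_{k+1}-Q^*\le \gamma P\Pi_{Q_k}(Q_k-Q^*)$ from the two greedy inequalities (this is exactly the paper's Proposition~\ref{prop:lower-bound2}), and then conclude via the fact that both bounding matrices have infinity-norm at most $\gamma$ (the paper's Lemma~\ref{lemma:max-norm-system-matrix}, which you phrase equivalently through row-stochasticity and a componentwise case split). No gaps; the argument matches the paper's.
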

The proof is given in~\cite[Lemma~2.5]{bertsekas1996neuro}, which is based on the contraction property of Bellman operator. Note that~\cite[Lemma~2.5]{bertsekas1996neuro} deals with the value iteration for value function instead of Q-function addressed in our work. However, it is equivalent to Q-VI. Next, a direct consequence of~\cref{thm:1} is the convergence of Q-VI
\begin{align}
\left\| {Q_k  - Q^* } \right\|_\infty   \le \gamma^k \left\| {Q_0  - Q^* } \right\|_\infty\label{eq:3}
\end{align}

\begin{algorithm}[t]
\caption{Q-VI}
  \begin{algorithmic}[1]
    \State Initialize $Q_0 \in {\mathbb R}^{|{\cal S}||{\cal A}|}$ randomly.
    \For{iteration $k=0,1,\ldots$}
        \State Update
        \[
Q_{k + 1} (s,a) = \underbrace {R(s,a) + \gamma \sum\limits_{s' \in S} {P(s'|s,a)\max _{a' \in A} Q_k (s',a')}}_{ = :F Q_k }
\]

    \EndFor
  \end{algorithmic}\label{algo:standard-Q-learning2}
\end{algorithm}

In what follows, an equivalent switching system model that captures the behavior of Q-VI is introduced, and based on it, we provide a different proof of \cref{thm:1}.

\section{Switching system model}\label{sec:convergence}

In this section, we study a discrete-time switching system model of Q-VI and establish its finite-time convergence based on the stability analysis of the switching system.
Using the notation introduced in \cref{sec:1}, the update in~\cref{algo:standard-Q-learning2} can be rewritten as
\begin{align}
Q_{k+1}= R+\gamma P\Pi_{Q_k}Q_k ,\label{eq:1}
\end{align}
Recall the definitions $\pi_Q(s)$ and $\Pi_Q$. Invoking the optimal Bellman equation $(\gamma P\Pi_{Q^*}-I)Q^*+R=0$,~\eqref{eq:1} can be further rewritten by
\begin{align}
(Q_{k + 1}  - Q^* ) = \gamma P\Pi _{Q_k } (Q_k  - Q^* ) + \gamma P(\Pi _{Q_k }  - \Pi _{Q^* } )Q^*,\label{eq:Q-learning-stochastic-recursion-form}
\end{align}
which is a SAS (switched affine system). In particular, for any $Q \in {\mathbb R}^{|{\cal S}||{\cal A}|}$, define
\begin{align*}
A_Q : = \gamma P\Pi _Q ,\quad b_Q : = \gamma P(\Pi _Q  - \Pi _{Q^* } )Q^*
\end{align*}
Hence, Q-VI can be concisely represented as the SAS
\begin{align}
Q_{k + 1}- Q^* = A_{Q_k} (Q_k - Q^*) + b_{Q_k},\label{eq:swithcing-system-form}
\end{align}
where $A_{Q_k}$ and $b_{Q_k}$ switch among matrices from $\{\gamma P\Pi^\pi:\pi\in \Theta\}$ and vectors from $\{\gamma P(\Pi^\pi - \Pi^{\pi^*})Q^* :\pi\in\Theta \}$ according to the changes of $Q_k$. Therefore, the convergence of Q-VI is now reduced to analyzing the stability of the above SAS. Th main obstacle in proving the stability arises from the presence of the affine term. Without it, we can easily establish the exponential stability of the corresponding deterministic switching system, under arbitrary switching policy. Specifically, we have the following result.
\begin{proposition}\label{prop:stability}
For arbitrary $H_k \in {\mathbb R}^{|{\cal S}||{\cal A}|}, k\ge 0$, the linear switching system $Q_{k+1} - Q^* = A_{H_k} (Q_k - Q^*), Q_0 - Q^*\in {\mathbb R}^{|{\cal S}||{\cal A}|}$, is exponentially stable such that $\|Q_{k+1}- Q^*\|_\infty\le \gamma \|Q_k - Q^*\|_\infty, k \ge 0$ and $\|Q_k- Q^*\|_\infty\le \gamma ^k \|Q_0 - Q^*\|_\infty, k \ge 0$.
\end{proposition}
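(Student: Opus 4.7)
The plan is to exploit the fact that for any policy $\pi$, the matrix $P\Pi^\pi$ is a genuine stochastic matrix on the state-action space, so that $A_{H_k} = \gamma P\Pi_{H_k}$ is $\gamma$ times a stochastic matrix, and stochastic matrices are non-expansive in the infinity norm.

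First I would verify the stochasticity claim: by the definition of $P$ and $\Pi^\pi$ in \cref{sec:1}, each row of $P$ corresponds to a state-action pair $(s,a)$ and sums to $1$ (since $\sum_{s'} P(s'|s,a)=1$), and each row of $\Pi^\pi$ corresponds to a state $s$ and is $\pi(s)^T \otimes e_s^T$, hence sums to $1$. A direct calculation (or just noting that $P\Pi^\pi$ is the transition probability matrix of the state-action pair Markov chain induced by $\pi$, as already remarked after \eqref{eq:swtiching-matrix}) shows that $P\Pi^\pi$ has nonnegative entries with each row summing to $1$. In particular, $P\Pi^\pi \mathbf{1} = \mathbf{1}$.

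Second, I would record the standard fact that if $M$ is a row-stochastic matrix and $x$ is any vector, then $\|Mx\|_\infty \le \|x\|_\infty$. This follows because $(Mx)_i = \sum_j M_{ij} x_j$ is a convex combination of entries of $x$, so $|(Mx)_i| \le \sum_j M_{ij} |x_j| \le \|x\|_\infty \sum_j M_{ij} = \|x\|_\infty$.

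Combining these two observations, for any switching sequence $\{H_k\}$ and any $k \ge 0$,
\begin{align*}
\|Q_{k+1} - Q^*\|_\infty = \gamma \|P\Pi_{H_k}(Q_k - Q^*)\|_\infty \le \gamma \|Q_k - Q^*\|_\infty,
\end{align*}
which is the one-step contraction. The $k$-step bound $\|Q_k - Q^*\|_\infty \le \gamma^k \|Q_0 - Q^*\|_\infty$ then follows by a trivial induction on $k$.

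There is essentially no hard step here; the only thing to be careful about is the bookkeeping that confirms $P\Pi^\pi$ really is row-stochastic under the ordering conventions fixed in \cref{sec:1}. Once that is in place, the infinity-norm non-expansiveness of stochastic matrices does all of the work, and the factor $\gamma$ comes for free from the discount factor already present in $A_Q = \gamma P\Pi_Q$. The significance of the proposition is that it handles the \emph{linear} (no affine term) switching system with arbitrary mode sequences; the real work in later sections will be to absorb the affine term $b_{Q_k}$ in \eqref{eq:swithcing-system-form} so that this clean argument can be transferred back to Q-VI.
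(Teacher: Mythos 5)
Your proposal is correct and follows essentially the same route as the paper: the paper's Lemma~\ref{lemma:max-norm-system-matrix} establishes $\|A_Q\|_\infty \le \gamma$ precisely because each row of $P\Pi_Q$ sums to one, and your convex-combination argument for the non-expansiveness of row-stochastic matrices in the infinity norm is just an unpacking of that induced-norm bound. The one-step contraction plus induction is exactly how the paper obtains the stated conclusion.
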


The above result follows immediately from the key fact that $\|A_Q \|_\infty \le \gamma$, which we formally state in the lemma below.
\begin{lemma}\label{lemma:max-norm-system-matrix}
For any $Q \in {\mathbb R}^{|{\cal S}||{\cal A}|}$, $\|A_Q \|_\infty \le \gamma$, where the matrix norm  $\| A \|_\infty :=\max_{1\le i \le m} \sum_{j=1}^n {|A_{ij} |}$ and $A_{ij}$ is the element of $A$ in $i$-th row and $j$-th column.
\end{lemma}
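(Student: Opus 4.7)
The plan is to exploit the row-stochastic structure of both factors in $A_Q = \gamma P \Pi_Q$, so that the induced infinity norm (maximum absolute row sum) of $P\Pi_Q$ is exactly $1$, which yields $\|A_Q\|_\infty \le \gamma$ upon pulling out the scalar $\gamma$.

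First I would unpack the definition of $P$. By construction, $P$ stacks the matrices $P_1, \ldots, P_{|\mathcal{A}|}$ vertically, and each $P_a$ is a transition probability matrix of size $|\mathcal{S}| \times |\mathcal{S}|$ with $P_a(s,s') = P(s'|s,a) \ge 0$ and $\sum_{s'} P(s'|s,a) = 1$. Hence every row of $P$ has nonnegative entries summing to $1$. Next I would examine $\Pi_Q = \Pi^{\pi_Q}$ row by row using~\eqref{eq:swtiching-matrix}: the $s$-th row equals $\vec{\pi}_Q(s)^T \otimes e_s^T$, which, viewed as a row vector in ${\mathbb R}^{|\mathcal{S}||\mathcal{A}|}$, has a single entry equal to $1$ (at the index corresponding to the pair $(s, \pi_Q(s))$) and zeros elsewhere. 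In particular, $\Pi_Q$ is also nonnegative with each row summing to $1$.

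I would then combine these observations: the product of two nonnegative row-stochastic matrices is nonnegative and row-stochastic, because for any row $i$,
\begin{align*}
\sum_j (P\Pi_Q)_{ij} = \sum_j \sum_k P_{ik} (\Pi_Q)_{kj} = \sum_k P_{ik} \sum_j (\Pi_Q)_{kj} = \sum_k P_{ik} = 1.
\end{align*}
Consequently $\|P\Pi_Q\|_\infty = \max_i \sum_j |(P\Pi_Q)_{ij}| = 1$, and multiplying by the positive scalar $\gamma$ gives $\|A_Q\|_\infty = \gamma \|P\Pi_Q\|_\infty = \gamma$, which is even slightly stronger than the claimed inequality.

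There is no real obstacle here; the proof is essentially bookkeeping with Kronecker products and row sums. The only place where one must be mildly careful is verifying that the index ordering implicit in the stacking conventions for $P$ and $\Pi_Q$ is consistent, so that the matrix product $P\Pi_Q$ indeed represents a valid stochastic transition on state-action pairs. Once the consistency of the enumeration in~\cref{sec:1} is acknowledged, the argument is immediate.
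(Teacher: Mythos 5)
Your proof is correct and follows the same route as the paper: both arguments reduce to observing that each row of $P\Pi_Q$ is nonnegative and sums to $1$, so the maximum absolute row sum of $A_Q = \gamma P\Pi_Q$ equals $\gamma$. You simply spell out the row-stochasticity bookkeeping that the paper's one-line proof leaves implicit.
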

\begin{proof}
Note $\sum_j|[A_Q]_{ij}|=\sum_j {| [\gamma P\Pi _Q ]_{ij}|}= \gamma$, which completes the proof.
\end{proof}

However, because of the additional affine term in the original switching system~\eqref{eq:swithcing-system-form}, it is not obvious how to directly derive its finite-time convergence. To circumvent the difficulty with the affine term, we will resort to two simpler upper and lower bounds, which are given below.
\begin{proposition}[Upper and lower bounds]\label{prop:lower-bound2}
For all $k\geq0$, we have
\[
A_{Q^* } (Q_k  - Q^* ) \le Q_{k + 1}  - Q^* \le A_{Q_k } (Q_k  - Q^* ).
\]
\end{proposition}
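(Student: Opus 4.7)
The plan is to rewrite the one-step error in two equivalent forms and then exploit the single essential fact that $\Pi_Q$ is the \emph{greedy} action-selection operator for $Q$, so it maximizes $\Pi^\pi Q$ over all stochastic policies $\pi$. Concretely, for any two vectors $Q, Q' \in \mathbb{R}^{|{\cal S}||{\cal A}|}$ one has the componentwise inequality
\[
\Pi_Q Q \geq \Pi^\pi Q \quad \text{for every policy } \pi, \quad \text{in particular } \Pi_Q Q \geq \Pi_{Q'} Q.
\]
This is just the definition of $\pi_Q(s) = \argmax_{a} Q(s,a)$. I would state and use this as the sole non-trivial ingredient.

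Next, starting from~\eqref{eq:1} and the optimal Bellman equation $Q^* = R + \gamma P\Pi_{Q^*} Q^*$, I would simply subtract to obtain the identity
\[
Q_{k+1} - Q^* = \gamma P\bigl(\Pi_{Q_k} Q_k - \Pi_{Q^*} Q^*\bigr),
\]
which is an equivalent (and more convenient) reformulation of~\eqref{eq:Q-learning-stochastic-recursion-form}.

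For the upper bound, I add and subtract $\gamma P \Pi_{Q_k} Q^*$ inside the parentheses to get
\[
Q_{k+1} - Q^* = \gamma P \Pi_{Q_k}(Q_k - Q^*) + \gamma P \bigl(\Pi_{Q_k} Q^* - \Pi_{Q^*} Q^*\bigr) = A_{Q_k}(Q_k - Q^*) + b_{Q_k}.
\]
By the greedy property applied to $Q^*$, we have $\Pi_{Q^*} Q^* \geq \Pi_{Q_k} Q^*$, so $\Pi_{Q_k} Q^* - \Pi_{Q^*} Q^* \leq 0$; since the entries of $\gamma P$ are nonnegative, $b_{Q_k} \leq 0$, proving $Q_{k+1} - Q^* \leq A_{Q_k}(Q_k - Q^*)$.

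For the lower bound, I instead add and subtract $\gamma P \Pi_{Q^*} Q_k$ to get
\[
Q_{k+1} - Q^* = \gamma P \Pi_{Q^*}(Q_k - Q^*) + \gamma P\bigl(\Pi_{Q_k} Q_k - \Pi_{Q^*} Q_k\bigr) = A_{Q^*}(Q_k - Q^*) + \gamma P\bigl(\Pi_{Q_k} Q_k - \Pi_{Q^*} Q_k\bigr).
\]
Now the greedy property applied to $Q_k$ gives $\Pi_{Q_k} Q_k \geq \Pi_{Q^*} Q_k$, and nonnegativity of $\gamma P$ turns this into a nonnegative residual, yielding $Q_{k+1} - Q^* \geq A_{Q^*}(Q_k - Q^*)$.

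Neither step looks like a serious obstacle; the only care needed is to keep track of which entity is being ``greedified'' against which vector. The main conceptual point worth emphasizing after the proof is that the upper bound replaces the unknown optimal policy by the current greedy policy, while the lower bound does the opposite, so the two bounds bracket $Q_{k+1}-Q^*$ between a SLS driven by $\Pi_{Q_k}$ and a fixed linear system driven by $\Pi_{Q^*}$; this bracketing is presumably what eliminates the affine term $b_{Q_k}$ in the subsequent analysis.
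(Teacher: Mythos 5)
Your proof is correct and follows essentially the same route as the paper's: both arguments reduce to the same two decompositions, with the upper bound following from $\Pi_{Q_k}Q^* \le \Pi_{Q^*}Q^*$ (so $b_{Q_k}\le 0$) and the lower bound from $\Pi_{Q_k}Q_k \ge \Pi_{Q^*}Q_k$ together with the entrywise nonnegativity of $\gamma P$. The only cosmetic difference is that you derive the two identities by adding and subtracting cross terms in $\gamma P(\Pi_{Q_k}Q_k - \Pi_{Q^*}Q^*)$, whereas the paper regroups the already-stated switched affine form~\eqref{eq:swithcing-system-form}; the resulting expressions and inequalities are identical.
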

\begin{proof}
For the lower bound, we have
\begin{align*}
&(Q_{k+1}- Q^*)\\
=&A_{Q^*} (Q_k-Q^*)+(A_{Q_k}-A_{Q^*}) (Q_k-Q^*)+b_{Q_k}\\
 =&A_{Q^*} (Q_k-Q^*) + \gamma P(\Pi_{Q_k}-\Pi_{Q^*})Q_k\\
\ge & A_{Q^*} (Q_k-Q^*)
\end{align*}
where the third line is due to $P(\Pi_{Q_k}- \Pi_{Q^*})Q_k\ge P(\Pi_{Q^*}-\Pi_{Q^*})Q_k=0$.
For the upper bound, one gets
\begin{align*}
(Q_{k+1}- Q^*)
=& A_{Q_k}(Q_k- Q^*)+b_{Q_k}\\
\leq& A_{Q_k}(Q_k- Q^*),
\end{align*}
where we used the fact that $b_{Q_k}=(\gamma P\Pi_{Q_k} Q^*-\gamma P\Pi_{Q^*} Q^*)\le (\gamma P\Pi_{Q^*} Q^*-\gamma P\Pi_{Q^*} Q^*)=0$ in the first inequality. This completes the proof.
\end{proof}

The lower bound is in the form of an LTI system. Specifically, the lower bound corresponds to a special LTI system form known as a positive linear system~\cite{roszak2009necessary}, where all the elements of the system matrix $A_{Q^*}$ are nonnegative.
In our analysis, this property will be utilized to derive new behaviors of Q-VI.
Similarly, the system matrix $A_{Q_k}$ in the upper bound switches according to the changes in $Q_k$.
Therefore, it can be proven that the upper bound is in the form of positive SLSs~\cite{blanchini2012co, fornasini2010linear, zhang2013stabilization, fornasini2011stability}, where all the elements of the system matrix for each mode are nonnegative.

\subsection{Finite-time error bound of Q-VI}
In this subsection, we provide a proof for the contraction property outlined in~\cref{thm:1}. Although this result represents one of the most basic and fundamental outcomes in classical dynamic programming, we offer an alternative proof in this paper. This proof, grounded in the switching system model discussed in earlier sections, is provided below.

Since $A_{Q^* } (Q_k  - Q^* ) \le Q_{k + 1}  - Q^* \le A_{Q_k } (Q_k  - Q^* )$ from~\cref{prop:lower-bound2}, it follows that $(e_a  \otimes e_s )^T A_{Q^* } (Q_k  - Q^* ) \le (e_a  \otimes e_s )^T (Q_k  - Q^* ) \le (e_a  \otimes e_s )^T A_{Q_k } (Q_k  - Q^* )$.
If $(e_a  \otimes e_s )^T (Q_k  - Q^* ) \le 0$, then $|(e_a  \otimes e_s )^T (Q_k  - Q^* )| \le |(e_a  \otimes e_s )^T A_{Q^* } (Q_k  - Q^* )|$, where $e_s \in {\mathbb R}^{|{\cal S}|}$ and $e_a \in {\mathbb R}^{|{\cal A}|}$ are the $s$-th and $a$-th standard basis vectors, respectively. If $(e_a  \otimes e_s )^T (Q_k  - Q^* ) > 0$, then $|(e_a  \otimes e_s )^T (Q_k  - Q^* )| \le |(e_a  \otimes e_s )^T A_{Q_k } (Q_k  - Q^* )|$. Therefore, one gets
\begin{align*}
&\left\| {Q_{k + 1}  - Q^* } \right\|_\infty\\
\le& \max \{ \left\| {A_{Q_k } (Q_k  - Q^* )} \right\|_\infty  ,\left\| {A_{Q*} (Q_k  - Q^* )} \right\|_\infty  \}\\
\le& \max \{ \gamma \left\| {Q_k  - Q^* } \right\|_\infty  ,\gamma \left\| {Q_k  - Q^* } \right\|_\infty  \}\\
=& \gamma \left\| {Q_k  - Q^* } \right\|_\infty,
\end{align*}
which completes the proof.

\section{Convergence of Q-VI on the orthant}
In the previous section, we revisited the convergence of Q-VI from the perspective of switching system viewpoints. In this section, we study additional geometric behaviors of Q-VI, again using the switching system perspective. Let us suppose that $Q^* \ge Q_0$ holds. Please note that such an initial value can be found by setting
\[
Q_0  = -\frac{1}{{1 - \gamma }}{\bf{1}}
\]
because
\[
 - \frac{1}{{1 - \gamma }}{\bf{1}} \le Q^*  \le \frac{1}{{1 - \gamma }}{\bf{1}}
\]
holds according to ~\cref{thm:3}. Then, using the bound provided in ~\cref{prop:lower-bound2}, it can be proved that $Q^* \ge Q_k,\forall k \geq 0$. This is equivalent to saying $0 \ge Q_k - Q^*,\forall k \geq 0$. In other words, if the initial iterate $Q_0$ falls within the shifted orthant, $Q^* \ge Q_0$, then future iterates will also stay within the same set, i.e., $Q^* \ge Q_k,\forall k \geq 0$.
\begin{proposition}\label{thm:2}
Suppose that $Q_0 - Q^* \le 0$ holds. Then, $Q_k - Q^* \le 0, \forall k \ge 0$.
\end{proposition}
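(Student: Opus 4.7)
The plan is to proceed by induction on $k$, leveraging the upper bound from \cref{prop:lower-bound2} together with the positivity of the system matrices $A_Q$. The base case $k=0$ is exactly the hypothesis $Q_0 - Q^* \le 0$.

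For the inductive step, suppose $Q_k - Q^* \le 0$ and aim to show $Q_{k+1} - Q^* \le 0$. By the upper bound in \cref{prop:lower-bound2}, we have
\[
Q_{k+1} - Q^* \le A_{Q_k}(Q_k - Q^*).
\]
Now observe that $A_{Q_k} = \gamma P \Pi_{Q_k}$ is entrywise nonnegative, since $\gamma > 0$ and both $P$ and $\Pi_{Q_k}$ have nonnegative entries (being built from a transition kernel and a one-hot encoding of the greedy policy, respectively). This is precisely the positive-system structure of the upper-bounding dynamics highlighted in the discussion preceding the proposition. Multiplying a nonpositive vector by a nonnegative matrix preserves nonpositivity, so $A_{Q_k}(Q_k - Q^*) \le 0$, which combined with the displayed inequality yields $Q_{k+1} - Q^* \le 0$.

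This closes the induction. I do not anticipate any real obstacle here: the only nontrivial ingredient is the upper bound in \cref{prop:lower-bound2}, which has already been established, and the rest is the elementary observation that the closed orthant $\{x : x \le 0\}$ is invariant under multiplication by a nonnegative matrix. In short, the result is a direct forward-invariance consequence of the fact that the switching system $(A_{Q_k})$ governing the upper envelope of the Q-VI iterates is a \emph{positive} switched linear system, which is the key structural property emphasized earlier in the section.
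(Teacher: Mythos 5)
Your proof is correct and matches the paper's argument exactly: induction on $k$, the upper bound $Q_{k+1}-Q^*\le A_{Q_k}(Q_k-Q^*)$ from \cref{prop:lower-bound2}, and the nonnegativity of $A_{Q_k}$ to conclude the right-hand side is nonpositive. No differences worth noting.
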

\begin{proof}
For an induction argument, suppose $Q_k - Q^* \le 0$ for any $k \ge 0$. Then, by~\cref{prop:lower-bound2}, it follows that $Q_{k + 1}  - Q^*  \le A_{Q_k } (Q_k  - Q^* ) \le 0$ because $A_{Q_k }$ is a nonnegative matrix. Therefore, $Q_{k+1}  - Q^* \le 0$, and the proof is completed by induction.
\end{proof}

From the results above, it can be observed that the behavior of $Q_k - Q^*$ is fully dictated by the lower bound, which is a linear function of $Q_k - Q^*$. Therefore, the fundamental theory of linear systems can be applied to analyze the behavior of Q-VI.
To facilitate this, the subsequent result reviews the Lyapunov theory for discrete-time linear systems.
\begin{proposition}\label{prop:Lyapunov-theorem}
For any $\varepsilon >0$ such that $\gamma + \varepsilon \in (0,1)$, there exists the corresponding positive definite $M\succ 0 $ such that
\[
A_{Q^*}^T M A_{Q^*} =  (\gamma + \varepsilon)^2 (M - I),
\]
and
\begin{align*}
\lambda_{\min}(M) \ge 1,\; \lambda_{\max}(M) \le \frac{|{\cal S}||{\cal A}|}{1-\left({\frac{\gamma}{\gamma+\varepsilon} }\right)^2}.
\end{align*}
\end{proposition}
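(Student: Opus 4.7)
The plan is to recognize the displayed identity as a scaled discrete-time Lyapunov equation and to construct $M$ explicitly as a Neumann-type series. First, I would introduce the normalized matrix $\tilde A := A_{Q^*}/(\gamma+\varepsilon)$ and divide the target identity by $(\gamma+\varepsilon)^2$ to put it in the standard form $\tilde A^T M \tilde A - M = -I$. Since $\|A_{Q^*}\|_\infty \le \gamma$ by \cref{lemma:max-norm-system-matrix} and $\gamma+\varepsilon>\gamma$, we have $\|\tilde A\|_\infty \le r$ with $r := \gamma/(\gamma+\varepsilon) \in (0,1)$, so the spectral radius of $\tilde A$ is strictly less than $1$, i.e., $\tilde A$ is Schur stable.

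Given Schur stability, the natural candidate is
\[
M := \sum_{k=0}^\infty (\tilde A^T)^k \tilde A^k.
\]
I would first establish absolute convergence: sub-multiplicativity of $\|\cdot\|_\infty$ gives $\|\tilde A^k\|_\infty \le r^k$, and the standard norm equivalence $\|B\|_2 \le \sqrt{n}\,\|B\|_\infty$ for $B \in \mathbb R^{n\times n}$ (with $n := |\mathcal S||\mathcal A|$) yields $\|(\tilde A^T)^k \tilde A^k\|_2 = \|\tilde A^k\|_2^2 \le n\, r^{2k}$, so the partial sums are Cauchy in the spectral norm. Then a telescoping computation gives $\tilde A^T M \tilde A = \sum_{k=1}^\infty (\tilde A^T)^k \tilde A^k = M - I$, and multiplying through by $(\gamma+\varepsilon)^2$ recovers the stated identity. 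Symmetry of $M$ is clear from the construction, and $M \succeq I$ (since the $k=0$ summand is $I$ and every later summand is positive semidefinite) already yields positive definiteness.

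For the eigenvalue bounds, $M \succeq I$ immediately gives $\lambda_{\min}(M) \ge 1$. For the upper bound, $M$ is positive semidefinite, so $\lambda_{\max}(M) = \|M\|_2$, and the triangle inequality together with the per-term estimate above gives
\[
\lambda_{\max}(M) \le \sum_{k=0}^\infty n\, r^{2k} = \frac{|\mathcal S||\mathcal A|}{1 - \bigl(\gamma/(\gamma+\varepsilon)\bigr)^{2}},
\]
which is exactly the desired bound.

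No step of this argument is genuinely hard — it is the textbook discrete-time Lyapunov equation for a Schur-stable subsystem matrix. The only delicate point is producing a dimension-explicit bound on $\lambda_{\max}(M)$: the only contraction estimate available for $\tilde A$ is in the infinity norm, so converting it into a spectral-norm bound unavoidably costs a factor $\sqrt{n}$ in each summand, which squares to the $|\mathcal S||\mathcal A|$ appearing in the numerator. This is the only place where the dimension enters, and it is worth flagging as the main technical subtlety of the proof.
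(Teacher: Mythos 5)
Your proposal is correct and follows essentially the same route as the paper: you construct the same Neumann-series Lyapunov matrix $M=\sum_{k\ge 0}(\gamma+\varepsilon)^{-2k}(A_{Q^*}^k)^TA_{Q^*}^k$, verify the identity by the same telescoping argument, and obtain the eigenvalue bounds via the same $\|\cdot\|_2\le\sqrt{n}\,\|\cdot\|_\infty$ conversion combined with $\|A_{Q^*}^k\|_\infty\le\gamma^k$. The only cosmetic difference is that you bound the $k=0$ term by $n$ rather than by $1$, which yields the same final estimate.
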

The proof of ~\cref{prop:Lyapunov-theorem} is provided in the Appendix. Note that ~\cref{prop:Lyapunov-theorem} can be applied to general LTI systems. However, because the lower bound takes the form of positive linear systems, it can be demonstrated that the Lyapunov matrix $M$ also possesses the additional special property outlined below.
\begin{proposition}
$M$ is a nonnegative matrix.
\end{proposition}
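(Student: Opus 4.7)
The plan is to exhibit $M$ explicitly as an infinite series of nonnegative matrices and then conclude entrywise nonnegativity from this representation.

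First, I would rescale the system matrix by defining $\tilde{A}:= A_{Q^*}/(\gamma+\varepsilon)$, so that the Lyapunov equation $A_{Q^*}^T M A_{Q^*} = (\gamma+\varepsilon)^2(M-I)$ becomes the standard discrete Lyapunov form
\[
M - \tilde{A}^T M \tilde{A} = I.
\]
By \cref{lemma:max-norm-system-matrix}, $\|A_{Q^*}\|_\infty \le \gamma$, so $\|\tilde{A}\|_\infty \le \gamma/(\gamma+\varepsilon) < 1$, which in particular ensures that the spectral radius of $\tilde{A}$ is strictly less than one.

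Next, under this spectral condition the unique solution to the above Lyapunov equation admits the series representation
\[
M = \sum_{k=0}^{\infty} (\tilde{A}^T)^k \tilde{A}^k,
\]
as can be checked by direct substitution (the partial sums telescope) together with the fact that uniqueness of the Lyapunov solution is guaranteed when $\rho(\tilde{A})<1$. Since \cref{prop:Lyapunov-theorem} already asserts existence and uniqueness of $M$, the $M$ produced there must coincide with this series.

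Finally, because $A_{Q^*} = \gamma P \Pi_{Q^*}$ is a product of entrywise nonnegative matrices, $\tilde{A}$ is nonnegative, and therefore so is every power $\tilde{A}^k$, its transpose $(\tilde{A}^T)^k$, and the product $(\tilde{A}^T)^k \tilde{A}^k$. The countable sum of entrywise nonnegative matrices is entrywise nonnegative, so $M$ is a nonnegative matrix, completing the proof. No serious obstacle arises here; the only subtle point is recognizing that the $M$ of \cref{prop:Lyapunov-theorem} must coincide with the series, which is immediate from uniqueness under $\rho(\tilde{A})<1$.
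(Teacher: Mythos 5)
Your proof is correct and takes essentially the same approach as the paper: the $M$ of \cref{prop:Lyapunov-theorem} is \emph{defined} in its proof by exactly the series $\sum_{k=0}^\infty (\gamma+\varepsilon)^{-2k}(A^k)^T A^k$ (equation \eqref{eq:4}), so nonnegativity follows immediately from the nonnegativity of $A_{Q^*}=\gamma P\Pi_{Q^*}$, just as you argue. Your detour through uniqueness of the Lyapunov solution is sound but unnecessary, since the paper's construction already gives the series representation directly.
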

\begin{proof}
The proof is easily done from the construction in~\eqref{eq:4}.
\end{proof}

From the perspectives provided above, we can derive a bound on $Q_k - Q^*$ in terms of the weighted Euclidean norm $\left\| \cdot \right\|_M$. This is an alternative to the infinity norm $\left\| \cdot \right\|_\infty$, which cannot be derived from classical contraction mapping arguments.
\begin{theorem}\label{quadratic-bound}
Suppose that $Q_0 - Q^* \le 0$ holds. Then, for any $k \geq 0$,
\[
\left\| {Q_{k+1}  - Q^* } \right\|_M  \le (\gamma  + \varepsilon ) \left\| {Q_k  - Q^* } \right\|_M 
\]
holds
\end{theorem}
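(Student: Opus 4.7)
The plan is to combine the sandwich inequality from \cref{prop:lower-bound2} with the nonnegativity of $M$ (established in the unnumbered proposition just above) and the Lyapunov identity from \cref{prop:Lyapunov-theorem}. The key conceptual point is that when $Q_0 - Q^* \le 0$, the orbit stays in the orthant $Q_k - Q^* \le 0$ (by \cref{thm:2}), so the upper bound $A_{Q_k}(Q_k - Q^*)\le 0$ tells us the sign of $Q_{k+1}-Q^*$, while the lower bound $A_{Q^*}(Q_k - Q^*)\le Q_{k+1}-Q^*$ lets us dominate $|Q_{k+1}-Q^*|$ componentwise by $|A_{Q^*}(Q_k - Q^*)|$. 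This reduces a switching-system contraction question to a purely LTI question about $A_{Q^*}$, which is exactly what the Lyapunov identity resolves.

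First I would apply \cref{thm:2} to conclude $Q_{k+1}-Q^*\le 0$. Then I would read the two-sided bound of \cref{prop:lower-bound2} as
\begin{equation*}
0 \le -(Q_{k+1}-Q^*) \le -A_{Q^*}(Q_k - Q^*),
\end{equation*}
so in coordinatewise absolute value $|Q_{k+1}-Q^*| \le |A_{Q^*}(Q_k-Q^*)|$, with both vectors nonnegative. Next, because $M$ has only nonnegative entries and a quadratic form $v^T M v$ with $v\ge 0$ is monotone in $v$ entrywise, I get
\begin{equation*}
(Q_{k+1}-Q^*)^T M (Q_{k+1}-Q^*) \le (Q_k - Q^*)^T A_{Q^*}^T M A_{Q^*} (Q_k - Q^*).
\end{equation*}
Finally I would substitute the Lyapunov identity $A_{Q^*}^T M A_{Q^*} = (\gamma+\varepsilon)^2(M-I)$ from \cref{prop:Lyapunov-theorem}, drop the $-I$ term using $(Q_k - Q^*)^T(Q_k-Q^*)\ge 0$, take square roots, and the desired contraction in $\|\cdot\|_M$ follows.

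The main obstacle, and the step that really uses the orthant hypothesis, is passing from the coordinatewise inequality $|Q_{k+1}-Q^*|\le |A_{Q^*}(Q_k-Q^*)|$ to the inequality on the $M$-weighted quadratic form. This would fail for a general symmetric positive definite $M$ (off-diagonal negative entries can make the quadratic form non-monotone in the entries of a nonnegative vector), so it is essential that the previous proposition gives $M\ge 0$ entrywise, which in turn relied on $A_{Q^*}$ being a nonnegative matrix, i.e., on the positive-linear-system structure of the lower bound. Everything else is bookkeeping.
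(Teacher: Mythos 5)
Your proposal is correct and follows essentially the same route as the paper: both use the sandwich $A_{Q^*}(Q_k-Q^*)\le Q_{k+1}-Q^*\le 0$, the entrywise nonnegativity of $M$ to get monotonicity of the quadratic form on the nonpositive orthant, the Lyapunov identity $A_{Q^*}^T M A_{Q^*}=(\gamma+\varepsilon)^2(M-I)$, and then drop the $-I$ term before taking square roots. You even make explicit the key point the paper leaves implicit, namely that the passage from the componentwise bound to the $M$-weighted quadratic form genuinely requires $M\ge 0$ entrywise and would fail for a general positive definite $M$.
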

\begin{proof}
We have
\begin{align*}
&(Q_{k + 1}  - Q^* )^T M(Q_{k + 1}  - Q^* )\\
\le& (Q_k  - Q^* )^T A_{Q^* }^T MA_{Q^* } (Q_k - Q^* )\\
= & (\gamma  + \varepsilon )^2 (Q_k  - Q^* )^T M(Q_k  - Q^* )\\
& - (\gamma  + \varepsilon )^2 (Q_k  - Q^* )^T (Q_k  - Q^* )\\
\le & (\gamma  + \varepsilon )^2 (Q_k  - Q^* )^T M(Q_k  - Q^* )
\end{align*}
where the first inequality follows from the fact that $M$ is a nonnegative matrix and $A_{Q^* } (Q_k  - Q^* ) \le Q_{k + 1}  - Q^*\leq 0$ from~\cref{prop:lower-bound2}, and the equality is due to the Lyapunov theorem in~\cref{prop:Lyapunov-theorem}.
Taking the squared root on both sides yields the desired conclusion.
\end{proof}

The result in~\cref{quadratic-bound} provides
\[
\left\| {Q_{k+1}  - Q^* } \right\|_M  \le (\gamma  + \varepsilon ) \left\| {Q_k - Q^* } \right\|_M 
\]
for an arbitrarily small $\varepsilon>0$, which implies
\[
\left\| {Q_{k}  - Q^* } \right\|_M  \le (\gamma  + \varepsilon )^k \left\| {Q_0 - Q^* } \right\|_M.
\]
This relationship reveals different geometric convergence behaviors: the sublevel sets associated with the infinity norm $\left\| \cdot \right\|_\infty$ are squares, whereas those corresponding to the weighted Euclidean norm $\left\| \cdot \right\|_M$ are ellipsoids.

Furthermore, our new approach enables us to derive a bound on a linear function of $Q_k - Q^*$. Specifically, for positive LTI systems, one common Lyapunov function is a linear Lyapunov function of the form $V(x) = v^T x$ for a positive vector $v \in {\mathbb R}^{|{\cal S}|\times |{\cal A}|}, v \geq 0$. We proceed by providing an explicit form of such a vector $v$.
\begin{proposition}\label{prop:Lyapunov-theorem2}
For any $\varepsilon >0$ such that $\gamma + \varepsilon \in (0,1)$ and for any positive vector $w \in {\mathbb R}^{|{\cal S}\times {\cal A}|}$, define
\begin{align*}
v: = \left( {\sum\limits_{i = 0}^\infty  {\left( {\frac{1}{{\gamma  + \varepsilon }}} \right)^i A_{Q^* }^i } } \right)^T w\in {\mathbb R}^{|{\cal S}\times {\cal A}|}
\end{align*}

Then, it holds true that
\[
v > 0,\quad \left\| w \right\|_\infty \le \left\| {v  } \right\|_\infty   \le \frac{\left\| w \right\|_1}{{1 - \gamma }}
\]
and
\begin{align}
v ^T A_{Q^*}   = (\gamma  + \varepsilon )(v^T  - w^T ).\label{eq:5}
\end{align}
\end{proposition}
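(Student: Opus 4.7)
My plan is to realize $v$ as the transposed Neumann series $v = S^T w$ with $S := (I - \alpha A_{Q^*})^{-1} = \sum_{i\geq 0}\alpha^i A_{Q^*}^i$ and $\alpha := 1/(\gamma+\varepsilon)$, and then extract each claim in the proposition from elementary properties of this series. The starting observation is that $A_{Q^*} = \gamma P\Pi_{Q^*}$ is entrywise nonnegative and satisfies $A_{Q^*}\mathbf{1} = \gamma\mathbf{1}$ (since $P\Pi_{Q^*}$ is row-stochastic), so in particular $\|A_{Q^*}\|_\infty = \gamma$ as in~\cref{lemma:max-norm-system-matrix}. Because $\alpha\gamma = \gamma/(\gamma+\varepsilon) < 1$, the series defining $S$ converges absolutely, so $v$ is well-defined; the $i=0$ term contributes exactly $w$ while the remaining terms are entrywise nonnegative, giving $v \geq w > 0$ componentwise and hence the lower norm bound $\|v\|_\infty \geq \|w\|_\infty$.

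Next, for the identity~\eqref{eq:5}, I would rearrange $(I - \alpha A_{Q^*}) S = I$ into $A_{Q^*} S = (\gamma+\varepsilon)(S - I)$ by multiplying through by $1/\alpha = \gamma+\varepsilon$. Because $S$ is a convergent power series in $A_{Q^*}$ it commutes with $A_{Q^*}$, so pre-multiplying by $w^T$ and using $v^T = w^T S$ yields
\begin{align*}
v^T A_{Q^*} = w^T S A_{Q^*} = w^T A_{Q^*} S = (\gamma+\varepsilon)\bigl(w^T S - w^T\bigr) = (\gamma+\varepsilon)(v^T - w^T),
\end{align*}
which is exactly~\eqref{eq:5}.

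For the upper bound on $\|v\|_\infty$ I would use a linear-Lyapunov trick with the all-ones vector. Rewriting~\eqref{eq:5} as $v^T\bigl((\gamma+\varepsilon)I - A_{Q^*}\bigr) = (\gamma+\varepsilon)\,w^T$ and right-multiplying by $\mathbf{1}$, the row-sum relation $\bigl((\gamma+\varepsilon)I - A_{Q^*}\bigr)\mathbf{1} = \varepsilon\,\mathbf{1}$ collapses the left-hand side to $\varepsilon\,v^T\mathbf{1}$, giving $\|v\|_1 = \tfrac{\gamma+\varepsilon}{\varepsilon}\|w\|_1$ because $v,w\geq 0$; then $\|v\|_\infty \leq \|v\|_1$ delivers the desired $\|w\|_1$-scaled upper bound on $\|v\|_\infty$.

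The step I expect to be most delicate is the Neumann-series manipulation underlying~\eqref{eq:5}: one must carefully track transposes, commute $A_{Q^*}$ past its own power series $S$, and justify a term-by-term index shift inside an infinite sum. All of these operations are legitimate precisely because $\alpha\gamma < 1$, and they are the technical crux of the proof. Once~\eqref{eq:5} is established, everything else—positivity, the lower bound $\|v\|_\infty \ge \|w\|_\infty$, and the upper bound via the $\mathbf{1}$-trick—follows by the elementary componentwise and row-sum reasoning sketched above.
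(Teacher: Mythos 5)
Your treatment of $v>0$, the lower bound $\left\| v \right\|_\infty \ge \left\| w \right\|_\infty$, and the identity \eqref{eq:5} is correct and essentially matches the paper: the paper also performs the index shift in the Neumann series (your resolvent identity $A_{Q^*}S=(\gamma+\varepsilon)(S-I)$ with $S=\sum_i (\gamma+\varepsilon)^{-i}A_{Q^*}^i$ is the same computation in disguise), and your observation that the $i=0$ term already contributes $w$ while all remaining terms are entrywise nonnegative is a clean way to obtain $v\ge w>0$ and hence the lower norm bound.

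The gap is in the upper bound. Your $\mathbf{1}$-trick is executed correctly and yields the exact identity $\left\| v \right\|_1=\frac{\gamma+\varepsilon}{\varepsilon}\left\| w \right\|_1$, but this does not ``deliver the desired bound'': since $\gamma+\varepsilon<1$, one checks $(\gamma+\varepsilon)(1-\gamma)-\varepsilon=\gamma(1-\gamma-\varepsilon)>0$ for $\gamma>0$, i.e. $\frac{\gamma+\varepsilon}{\varepsilon}>\frac{1}{1-\gamma}$ strictly, so the constant you obtain is strictly worse than the one claimed and the last step of your argument is a non sequitur. In fact, your exact computation shows the stated bound cannot be proved at all: for $|{\cal S}|=|{\cal A}|=1$ one has $A_{Q^*}=\gamma$ and $v=\frac{\gamma+\varepsilon}{\varepsilon}w>\frac{1}{1-\gamma}w$, so $\left\| v \right\|_\infty\le \left\| w \right\|_1/(1-\gamma)$ fails. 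The paper's own proof arrives at $1/(1-\gamma)$ only by silently dropping the weights $(1/(\gamma+\varepsilon))^i\ge 1$ from the series before bounding $\| A_{Q^*}^i \|_\infty\le\gamma^i$, which is not valid. The correct constant is $(\gamma+\varepsilon)/\varepsilon$ --- exactly what your row-sum argument produces --- and the proposition should be amended accordingly; fortunately nothing downstream (in particular \cref{thm:4}) uses the numerical value of $\left\| v \right\|_\infty$.
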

The proof is given in Appendix. \cref{prop:Lyapunov-theorem2} proves that $V(x) = v^T x $ plays the role of a linear Lyapunov function for the positive LTI system. From~\cref{prop:Lyapunov-theorem2}, one can prove another convergence result of Q-VI.
\begin{theorem}\label{thm:4}
Suppose that $Q_0 - Q^* \le 0$ holds. For the positive vector $v$ given in~\cref{prop:Lyapunov-theorem2}, we have
\begin{align*}
(\gamma  + \varepsilon )v^T (Q_k  - Q^* )\le v^T (Q_{k + 1}  - Q^* ) \le 0
\end{align*}
for all $k \ge 0$.
\end{theorem}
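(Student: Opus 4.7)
The plan is to deduce the two inequalities separately and combine them, using the sandwich bound from \cref{prop:lower-bound2}, the invariance result in \cref{thm:2}, and the linear Lyapunov identity \eqref{eq:5} from \cref{prop:Lyapunov-theorem2}.

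First, I would handle the upper inequality $v^T(Q_{k+1}-Q^*)\le 0$. By \cref{thm:2}, the hypothesis $Q_0-Q^*\le 0$ propagates to $Q_{k+1}-Q^*\le 0$ componentwise for every $k\ge 0$. Since \cref{prop:Lyapunov-theorem2} guarantees $v>0$, taking the inner product of a nonpositive vector with a strictly positive vector gives $v^T(Q_{k+1}-Q^*)\le 0$ immediately.

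For the lower inequality, I would start from the componentwise lower bound $A_{Q^*}(Q_k-Q^*)\le Q_{k+1}-Q^*$ supplied by \cref{prop:lower-bound2}. Because $v\ge 0$, this componentwise inequality is preserved under left multiplication by $v^T$, yielding
\begin{equation*}
v^T A_{Q^*}(Q_k-Q^*)\le v^T(Q_{k+1}-Q^*).
\end{equation*}
Now I would substitute the Lyapunov identity \eqref{eq:5}, $v^T A_{Q^*}=(\gamma+\varepsilon)(v^T-w^T)$, to rewrite the left-hand side as
\begin{equation*}
v^T A_{Q^*}(Q_k-Q^*)=(\gamma+\varepsilon)v^T(Q_k-Q^*)-(\gamma+\varepsilon)w^T(Q_k-Q^*).
\end{equation*}
Since $w>0$ and $Q_k-Q^*\le 0$ (again by \cref{thm:2}), the correction term satisfies $-(\gamma+\varepsilon)w^T(Q_k-Q^*)\ge 0$, so $v^T A_{Q^*}(Q_k-Q^*)\ge (\gamma+\varepsilon)v^T(Q_k-Q^*)$. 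Chaining this with the previous inequality gives $(\gamma+\varepsilon)v^T(Q_k-Q^*)\le v^T(Q_{k+1}-Q^*)$, which is the desired lower bound.

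There is no real obstacle here; the main subtlety is simply recognizing that \eqref{eq:5} is precisely the discrete-time linear Lyapunov inequality for the positive system driven by $A_{Q^*}$, and that the orthant invariance from \cref{thm:2} is what lets us absorb the $w$-correction term with the correct sign. Everything else is componentwise monotonicity under multiplication by a nonnegative vector.
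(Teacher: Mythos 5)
Your proof is correct and follows essentially the same route as the paper's: both use the identity \eqref{eq:5} together with the lower bound from \cref{prop:lower-bound2} and the nonnegativity of $v$, absorb the $-(\gamma+\varepsilon)w^T(Q_k-Q^*)$ term via the orthant invariance of \cref{thm:2}, and obtain the upper bound from $Q_{k+1}-Q^*\le 0$. The only difference is presentational — you split the two inequalities while the paper chains them in a single display.
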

\begin{proof}
Multiplying both sides of~\eqref{eq:5} in~\cref{prop:Lyapunov-theorem2} by $Q_k - Q^*$ from the right leads to
\begin{align*}
&(\gamma  + \varepsilon )v^T (Q_k  - Q^* ) - (\gamma  + \varepsilon )w^T (Q_k  - Q^* )\\
=& v^T A_{Q^* } (Q_k  - Q^* )\\
\le& v^T (Q_{k + 1}  - Q^* )\\
 \le& 0
\end{align*}
where the first equality comes from~\eqref{eq:5}, the first inequality is due to $A_{Q^* } (Q_k  - Q^* ) \le Q_{k + 1}  - Q^*$ in~\cref{prop:lower-bound2}, $v\geq 0$, and the second inequality follows from $Q_{k + 1}  - Q^*  \le 0$ in~\cref{thm:2}. Noting that $- (\gamma  + \varepsilon )w^T (Q_k  - Q^* )\geq 0$, the result implies 
\begin{align*}
(\gamma  + \varepsilon )v^T (Q_k  - Q^* )\le v^T (Q_{k + 1}  - Q^* ) \le 0.
\end{align*}
This completes the proof.
\end{proof}

\cref{thm:4} implies
\begin{align}
(\gamma  + \varepsilon )^k v^T (Q_0  - Q^* ) \le v^T (Q_k  - Q^* ) \le 0,\quad \forall k \ge 0.\label{eq:6}
\end{align}

Suppose that $Q_k - Q^* = [v]_k + [v]_k^\bot$, where $[v]_k$ represents the component of $Q_k - Q^*$ in the direction of $v$, and $[v]_k^\bot$ represents the components of $Q_k - Q^*$ orthogonal to $v$. Then, according to~\eqref{eq:6}, the component of $Q_k - Q^*$ in the direction of $v$ diminishes exponentially. From the construction of $v$ in ~\cref{prop:Lyapunov-theorem2}, it can be seen that there could be infinitely many such vectors $v$, depending on the positive vector $w$. Therefore, $Q_k - Q^*$ becomes trapped in the intersections of the nonpositive orthant and infinitely many half planes. An illustration of a single half plane is provided in~\cref{fig:1}.
\begin{figure}[ht]
\centering\includegraphics[width=8cm,height=8cm]{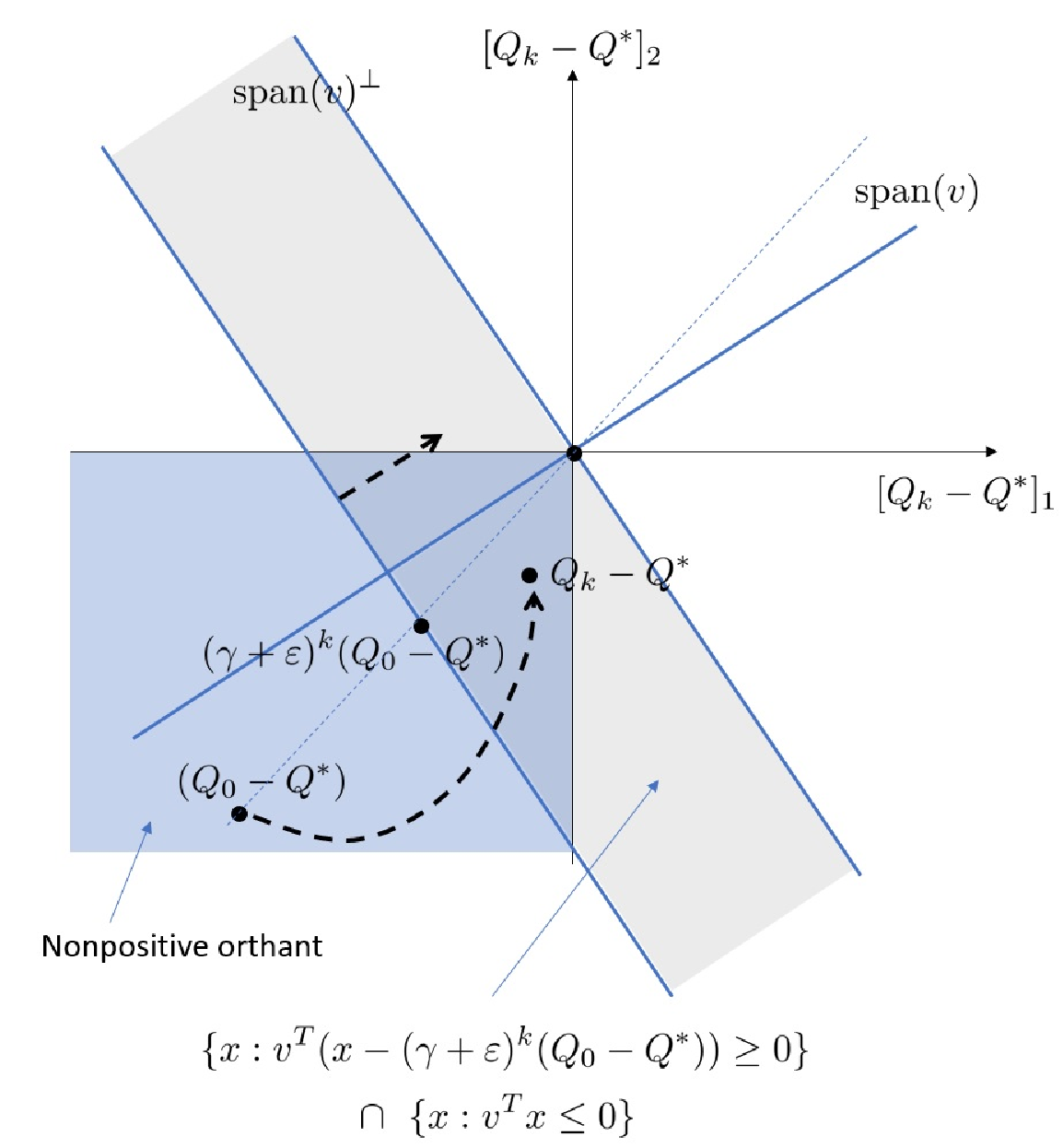}
\caption{Evolution of $Q_k-Q^*$ and geometric properties from~\cref{thm:4}. }\label{fig:1}
\end{figure}

\section{Conclusion}\label{sec:conclusion}
In this paper, we have presented additional insights on value iteration, approached through the lens of switching system models in the control community. This offers a connection between value iteration and switching system theory and reveals additional geometric behaviors of value iteration. Specifically, we introduced a switching system model of value iteration and, based on it, provided a novel proof for the contraction property of the value iteration. Moreover, our insights led to the proof of new geometric behaviors of value iteration when the initial iterate resides in a particular set (the shifted orthant). We believe that the perspectives proposed here could serve as useful tools applicable in various settings. As such, further development of these methods may present a valuable future direction.

\bibliographystyle{IEEEtran}
\bibliography{reference}

\section{Appendix}

\subsection{Proof of Proposition~\ref{prop:Lyapunov-theorem}}\label{app:prop-Lyapunov}

\begin{proof}
For simplicity, denote $A=A_{Q^*}$. Consider matrix $M$ such that
\begin{align}
M = \sum_{k=0}^\infty {\left({\frac{1}{\gamma+\varepsilon} }\right)^{2k} (A^k)^T A^k}. \label{eq:4}
\end{align}
Noting that
\begin{align*}
&(\gamma+\varepsilon)^{-2} A^T M A + I\\
=& \frac{1}{(\gamma+\varepsilon)^2}A^T \left(\sum_{k=0}^\infty {\left( {\frac{1}{\gamma+\varepsilon}} \right)^{2k} (A^k)^T A^k }\right)A + I\\
=& M,
\end{align*}
we have $(\gamma+\varepsilon)^{-2} A^T M A + I = M$, resulting in the desired conclusion. Next, it remains to prove the existence of $M$ by proving its boundedness. Taking the norm on $M$ leads to
\begin{align*}
\left\| P \right\|_2  =& \left\| {I + (\gamma  + \varepsilon )^{ - 2} A^T A + (\gamma  + \varepsilon )^{ - 4} (A^2 )^T A^2  +  \cdots } \right\|_2\\
\le& \left\| I \right\|_2  + (\gamma  + \varepsilon )^{ - 2} \left\| {A^T A} \right\|_2\\
&  + (\gamma  + \varepsilon )^{ - 4} \left\| {(A^2 )^T A^2 } \right\|_2  +  \cdots\\
=& \left\| I \right\|_2  + (\gamma  + \varepsilon )^{ - 2} \left\| A \right\|_2^2  + (\gamma  + \varepsilon )^{ - 4} \left\| {A^2 } \right\|_2^2  +  \cdots\\
=& 1 + |{\cal S}||{\cal A}|(\gamma  + \varepsilon )^{ - 2} \left\| A \right\|_\infty ^2\\
&  + |S||A|(\gamma  + \varepsilon )^{ - 4} \left\| {A^2 } \right\|_\infty ^2  +  \cdots\\
=& 1 - |{\cal S}||{\cal A}| + \frac{{|{\cal S}||{\cal A}|}}{{1 - \left( {\frac{\gamma }{{\gamma  + \varepsilon }}} \right)^2 }}.
\end{align*}

Finally, we prove the bounds on the maximum and minimum eigenvalues.
From the definition~\eqref{eq:4}, $M \succeq I$, and hence $\lambda_{\min}(M)\ge 1$. On the other hand, one gets
\begin{align*}
&\lambda_{\max}(M)\\
=& \lambda_{\max}(I + (\gamma+\varepsilon)^{-2} A^T A\\
&+ (\gamma+\varepsilon)^{-4}(A^2)^T A^2+\cdots)\\
\le& \lambda_{\max}(I) + (\gamma+\varepsilon)^{-2} \lambda_{\max}(A^T A)\\
&+ (\gamma+\varepsilon)^{-4}\lambda_{\max}((A^2)^T A^2 )+\cdots\\
=& \lambda_{\max}(I) + (\gamma+\varepsilon)^{-2} \|A\|_2^2 + (\gamma+\varepsilon)^{-4} \|A^2\|_2^2  +  \cdots\\
\le& 1 + |{\cal S}||{\cal A}|(\gamma+\varepsilon)^{-2} \|A\|_\infty^2 \\
& + |{\cal S}||{\cal A}|(\gamma+\varepsilon)^{-4} \|A^2\|_\infty ^2 + \cdots\\
\le& \frac{|{\cal S}||{\cal A}|}{1 - \left(\frac{\gamma}{\gamma+\varepsilon} \right)^2},
\end{align*}
where $|\mathcal{S}|$ and $ |\mathcal{A}|$ denote the cardinality of the sets $\cal S$ and $\cal A$, respectively.
The proof is completed.
\end{proof}

\subsection{Proof of Proposition~\ref{prop:Lyapunov-theorem2}}

\begin{proof}
We have{\small
\begin{align*}
v^T A_{Q^* }  =& w^T \left( {\sum\limits_{i = 0}^\infty  {\left( {\frac{1}{{\gamma  + \varepsilon }}} \right)^i A_{Q^* }^i } } \right)A_{Q^* }\\
 =& (\gamma  + \varepsilon )w^T \left( {\sum\limits_{i = 1}^\infty  {\left( {\frac{1}{{\gamma  + \varepsilon }}} \right)^i A_{Q^* }^i } } \right)\\
 =& (\gamma  + \varepsilon )\left\{ {w^T \left( {\sum\limits_{i = 0}^\infty  {\left( {\frac{1}{{\gamma  + \varepsilon }}} \right)^i A_{Q^* }^i } } \right) - w^T } \right\}\\
 =& (\gamma  + \varepsilon )(v^T  - w^T )
\end{align*}}

Moreover,{\small
\begin{align*}
\left\| v \right\|_\infty   =& \left\| {w^T \left( {\sum\limits_{i = 0}^\infty  {A_{Q^* }^i } } \right)} \right\|_\infty\\
 \le& \left\| w^T \right\|_\infty  \left\| {\sum\limits_{i = 0}^\infty  {A_{Q^* }^i } } \right\|_\infty\\
 \le& \| w \|_1 \sum\limits_{i = 0}^\infty  {\left\| {A_{Q^* }^i } \right\|_\infty  }\\
 \le& \| w \|_1 \sum\limits_{i = 0}^\infty  {\gamma ^i }\\
 =& \frac{\| w \|_1}{{1 - \gamma }}
\end{align*}}
and
\[
\left\| v \right\|_\infty   \ge \left\| w \right\|_\infty
\]

\end{proof}

\end{document}